\newtheorem{thm}{\protect\theoremname}
\theoremstyle{definition}
\theoremstyle{plain}
\newtheorem{prop}[thm]{Proposition}
\theoremstyle{plain}
\theoremstyle{plain}
\theoremstyle{remark}
\def\keyFont{\fontsize{8}{11}\helveticabold }
\def\firstAuthorLast{Berner {et~al.}} %use et al only if is more than 1 author
\def\Authors{Rico Berner\,$^{1,2,*}$ and Serhiy Yanchuk\,$^{1}$}
\begin{document}
\onecolumn
\firstpage{1}

\title[Synchronization in networks with distance-dependent plasticity]{Synchronization in networks with heterogeneous adaptation rules and applications to distance-dependent synaptic plasticity} 

\author[\firstAuthorLast ]{\Authors} %This field will be automatically populated
\address{} %This field will be automatically populated
\correspondance{} %This field will be automatically populated

\extraAuth{}% If there are more than 1 corresponding author, comment this line and uncomment the next one.
%\extraAuth{corresponding Author2 \\ Laboratory X2, Institute X2, Department X2, Organization X2, Street X2, City X2 , State XX2 (only USA, Canada and Australia), Zip Code2, X2 Country X2, email2@uni2.edu}

\maketitle

\newcommand{\rem}[2]{\MakeUppercase{\bf [#1: #2]} }

\begin{abstract}
This work introduces a methodology for studying synchronization in adaptive networks with heterogeneous plasticity (adaptation) rules. As a paradigmatic model, we consider a network of adaptively coupled phase oscillators with distance-dependent adaptations. For this system, we extend the master stability function approach to adaptive networks with heterogeneous adaptation. Our method allows for separating the contributions of network structure, local node dynamics, and  heterogeneous adaptation in determining synchronization. Utilizing our proposed methodology, we explain mechanisms leading to synchronization or desynchronization by enhanced long-range connections in nonlocally coupled ring networks and networks with Gaussian distance-dependent coupling weights equipped with a biologically motivated plasticity rule. 

\tiny
 \keyFont{ \section{Keywords:} synaptic plasticity, adaptive networks, phase oscillator, synchronization, distance-dependent synaptic plasticity, nonlocally coupled rings, master stability approach} %All article types: you may provide up to 8 keywords; at least 5 are mandatory.
\end{abstract}

%--------------------------------------------------
% Introduction
%--------------------------------------------------
\section{Introduction}\label{sec:intro}
In nature and technology, complex networks serve as a ubiquitous paradigm with a broad range of applications from physics, chemistry, biology, neuroscience, socioeconomic, and other systems~\cite{NEW03}. Dynamical networks consist of interacting dynamical units, such as neurons or lasers. Collective behavior in dynamical networks has attracted much attention in recent decades. Depending on the network and the specific dynamical system, various synchronization patterns with increasing complexity were explored~\cite{PIK01,STR01a,ARE08,BOC18}. Even in simple models of coupled oscillators, patterns such as complete synchronization~\cite{KUR84,PEC97}, cluster synchronization~\cite{YAN01a,CHO09,BEL11a,ZHA20}, and various forms of partial synchronization have been found, such as frequency clusters~\cite{BER19}, solitary~\cite{JAR18,TEI19,BER20c}, or chimera states~\cite{KUR02a,ABR04,SCH16b,OME18a,OME19c}. In particular, synchronization is believed to play a crucial role in brain networks, for example, under normal conditions in the context of cognition and learning~\cite{SIN99,FEL11}, and under pathological conditions, such as Parkinson's disease~\cite{HAM07,GOR20,PFE21}, epilepsy~\cite{JIR13,JIR14,AND16,GER20}, tinnitus~\cite{TAS12,TAS12a}, schizophrenia, to name a few~\cite{UHL09}.

The powerful methodology of master stability function~\cite{PEC98} has been a milestone for the analysis of synchronization phenomena. This method allows for the separation of dynamic and structural features in dynamical networks. It greatly simplifies the problem by reducing the dimension and unifying the synchronization study for different networks. Since its introduction, the master stability approach has been extended and refined for various complex systems~\cite{FLU10b,DAH12,KEA12,KYR14,LEH15b,TAN19,BER20,BOE20,MUL20}, and methods beyond the local stability analysis have been developed~\cite{BEL04,BEL05b,BEL06,BEL06a,DAL20}. More recently, the master stability approach has been extended to another class of oscillator networks with high application potential, namely adaptive networks~\cite{BER20b}.

Adaptive networks are commonly used models 
%for synaptic plasticity~\cite{MEI09a,MIK13,MIK14} which determines learning, memory, and development in neural circuits. Moreover, 
%adaptive networks have been reported 
for various systems from nature and technology~\cite{JAI01,PRO05a,GRO06b,MAR17b,KUE19a,HOR20,MEI09a,MIK13,MIK14}. A prominent example are neuronal networks with spike-timing dependent plasticity, in which the synaptic coupling between neurons changes depending on their relative spiking times~\cite{MAR97a,ABB00,CAP08a,POP13}. There are a large number of studies investigating the dynamic properties induced by this form of synaptic plasticity~\cite{ZEN15}. However, analysis is usually limited to only one or two forms of spike timing-dependent plasticity within a neuronal population. On the other hand, experimental studies indicate that different forms of spike timing-dependent plasticity may be present within a neuronal population, where the form depends on the connection structure between the axons and dendrites~\cite{TAZ20}. Among all structural aspects, an important factor for the specific form of the plasticity rule is the distance between neurons~\cite{FRO05a,SJO06,FRO10}. More specifically, it has been found that the plasticity rule between proximal or distal neurons, respectively, can change from Hebbian-like to anti-Hebbian-like~\cite{LET06,MEI20a}.

This work introduces a methodology to study synchronization in adaptive networks with heterogeneous plasticity (adaptation) rules. As a paradigmatic system, we consider an adaptively coupled phase oscillator network~\cite{AOK09,KAS17,KAS18a,BER19a,BER21a,FEK20,FRA20}, which is proven to be useful for predicting and describing phenomena occurring in more realistic and detailed models~\cite{POP15,LUE16,CHA17a,ROE19a}. 
More specifically, in the spirit of the master stability function approach, we consider the synchronization problem as the interplay between network structure and a heterogeneous adaptation rule arising from distance- (or location-)dependent synaptic plasticity. For a given heterogeneous adaptation rule, our master stability function provides synchronization criteria for any coupling configuration. As illustrative examples, we consider a nonlocally coupled ring with biologically motivated plasticity rule, and a network with a Gaussian distance-dependent coupling weights. We explained such intriguing effects as synchronization or desynchronization by enhancement of long-distance links. 

We introduce the model in section~\ref{sec:model}. Building on findings from~\cite{BER20b}, we develop a master stability approach in section~\ref{sec:msf} that takes a heterogeneous adaptation rule in account. In section~\ref{sec:EVApprox}, we provide an approximation of the structural eigenvalues that determine the stability of the synchronous state. We then consider two different setups: a nonlocally coupled ring in section~\ref{sec:NonLocalRing} and a weighted network with Gaussian distance distribution of coupling weights in section~\ref{sec:Gaussian}. Both systems are equipped with a biologically motivated plasticity rule. In section~\ref{sec:conclusions}, we summarize the results.
%--------------------------------------------------
% Model
%--------------------------------------------------
\section{Model}\label{sec:model}
In this work, we study the synchronization on networks with adaptive coupling weights, where the adaptation (plasticity) rule depends on the distance between oscillators (neurons). We consider the model of adaptively coupled phase oscillators, which has proven to be useful for understanding dynamics in neuronal systems with spike timing-dependent plasticity~\cite{LUE16,ROE19a,BER20b}. The model reads as follows:
\begin{align}
	\frac{\mathrm{d}}{\mathrm{d}t}{{\phi}}_i &= \omega + \sum_{j=1}^N a_{ij} \kappa_{ij} g(\phi_i - \phi_j), \label{eq:PO_DDP_phi}\\
	\frac{\mathrm{d}}{\mathrm{d}t}{\kappa}_{ij} &= -\epsilon\left(\kappa_{ij}+h_{ij}(\phi_i-\phi_j)\right), \label{eq:PO_DDP_kappa}
\end{align}
where $\phi_i\in S^1=\mathbb{R}/2\pi\mathbb{Z}$ ($i=1,\dots,N$) is the phase of the $i$th oscillator, $\kappa_{ij}$ ($i,j=1,\dots,N$) is the dynamical coupling weight from oscillator $j$ to $i$, $\omega$ denotes the natural frequency of each oscillator, and $a_{ij}\in[0,1]$ are the entries of the weighted adjacency matrix $A$ describing the network connectivity. The time scales of the "fast" phase oscillators and "slow" coupling weights are separated by the parameter $\epsilon$, which we assume to be small $0<\epsilon\ll 1$. The functions $g$ and $h_{ij}$ denote the coupling and the $N^2$ plasticity functions, respectively. For illustrative purposes, the coupling function is set throughout the paper to $g(\phi) =-\sin(\phi+\alpha)/N$  with the phase lag parameter $\alpha$~\cite{SAK86}. Such a phase lag can account for a small synaptic propagation delay~\cite{ASL17,ASL18a}. For formal derivations, however, a generic coupling function is used. Note that the system~\eqref{eq:PO_DDP_phi}--\eqref{eq:PO_DDP_kappa} is shift-symmetric, i.e., invariant under the transformation $\phi_i \mapsto \phi_i + \psi$ for any $\psi\in S^1$. This allows us to restrict our consideration to the case $\omega=0$ by introducing a new  "co-rotating" coordinate system $\phi_{i,\text{new}} = \psi_i - \omega t$.

The main difference of system \eqref{eq:PO_DDP_phi}--\eqref{eq:PO_DDP_kappa} from the models considered previously in the literature~\cite{KAS17,KAS18a,BER20,BER21b,FEK20}, is that the plasticity functions $h_{ij}$ can be different for each network connection $j\to i$.

%\begin{defn}
A solution to~\eqref{eq:PO_DDP_phi}--\eqref{eq:PO_DDP_kappa} is called  \textit{phase-locked} if, for all $i=1,\dots,N$, the phases evolve as $\phi_i = \Omega t + \vartheta_i$ with some \textit{collective frequency} $\Omega\in \mathbb{R}$ and $\vartheta_i\in S^1$. If $\vartheta_i=\vartheta$ for all $i=1,\dots,N$, the phase-locked state is called \textit{in-phase synchronous} or, short, \textit{synchronous} state.
%\end{defn}

In the case of in-phase synchronous state, we can set $\vartheta_i=0$ for each oscillator due to the shift symmetry of \eqref{eq:PO_DDP_phi}--\eqref{eq:PO_DDP_kappa}. The in-phase synchronous state is given as
\begin{align}
	{{\phi}}^s(t) &= -  w g(0)t, \label{eq:PO_DDP_sync_phi}\\
	{\kappa}^s_{ij} &= - h_{ij}(0), \label{eq:PO_DDP_sync_kappa}
\end{align}
where we assume that the weighted row sum $w=\sum_{j=1}^N a_{ij} h_{ij}(0)$ is constant for all $i=1\dots,N$. Such an assumption of constant row sum  is necessary for the existence of the synchronous state. Moreover, it is satisfied for commonly considered cases of global or nonlocal shift-invariant coupling.

In the following section, we show how the stability of the synchronous state is determined in a master-stability-like approach.  

%--------------------------------------------------
% Master stability approach
%--------------------------------------------------
\section{Master stability approach}\label{sec:msf}
In section~\ref{sec:model}, we have introduced a general class of models and the synchronous state, that are considered throughout this paper. In this section, we derive a framework for the local stability analysis of the synchronous states.
We note that the master stability approach for homogeneous adaptations $h_{ij}=h$ was introduced in \cite{BER20b,VOC21}. Here we extend the methodology to heterogeneous adaptation rules.

To describe the local stability, we introduce the variations ${\xi}_i = \phi_i-\phi^s$ and $\chi_{ij}=\kappa_{ij}-{\kappa}^{s}_{ij}$. The linearized equations for these variations can be written in the following matrix form 
\begin{align}\label{eq:VarEqs}
	\frac{\mathrm{d}}{\mathrm{d}t}
	\begin{pmatrix}
		\bm{\xi} \\
		\bm{\chi}
	\end{pmatrix}=
	{J}
	\begin{pmatrix}
		{\bm{\xi}} \\
		{\bm{\chi}}
	\end{pmatrix}
	=\begin{pmatrix}
		\mathrm{D}g(0) L^{h} & g(0) B \\ 
		-\epsilon C & -\epsilon\mathbb{I}_{N^2}
	\end{pmatrix}
	\begin{pmatrix}
		{\bm{\xi}} \\
		{\bm{\chi}}
	\end{pmatrix},
\end{align}
where $\bm{\xi}=(\xi_1,\dots,\xi_N)^T$ is $N$-dimensional vector containing the perturbations $\xi_i=\delta\phi_i$ of the phases and $\bm{\chi}=(\chi_{11},\chi_{12},\dots,\chi_{NN})^T$  are $N^2$-dimensional vectorized perturbations of coupling weights $\bm{\chi} = \bm{\mathrm{vec}}\left[\delta \kappa_{ij}\right]$, respectively.
The $N\times N$ weighted Laplacian matrix $L^{h}$ has the following elements
\begin{align}\label{eq:POLaplacianH}
	l^{h}_{ij}=\begin{cases}
		- \sum_{m=1,m\ne i}^N a_{im} h_{im}(0) , & i=j,\\
		a_{ij} h_{ij}(0) , & i\ne j.
	\end{cases}
\end{align}
The time-independent matrices $B$ and $C$ are
\begin{align*}
	B= \begin{pmatrix}
		\bm{a}_{1} & & \\
		& \ddots & \\
		& & \bm{a}_{N}
	\end{pmatrix}, \quad 
	C=\begin{pmatrix}
		(\mathrm{D}h)^T_{1} & & \\
		& \ddots & \\
		& & (\mathrm{D}h)^T_{N}
	\end{pmatrix}-
	\begin{pmatrix}
		\mathrm{diag}\,(\mathrm{D}h)_{1} \\
		\vdots \\
		\mathrm{diag}\,(\mathrm{D}h)_N
	\end{pmatrix},
\end{align*}
where $\bm{a}_i=(a_{i1},\dots,a_{iN})$,
$(\mathrm{D}h)_i=(\mathrm{D}h_{i1}(0),\dots,\mathrm{D}h_{iN}(0))$, and
\begin{align*}
	\mathrm{diag}\,( \mathrm{D}h(0))_{i} &= \begin{pmatrix}
		\mathrm{D}h_{i1}(0) & & \\
		& \ddots & \\
		& & \mathrm{D}h_{iN}(0)
	\end{pmatrix}.
\end{align*}
Note that due to the shift symmetry of~\eqref{eq:PO_DDP_phi}--\eqref{eq:PO_DDP_kappa}, the Jacobian $J$ in~\eqref{eq:VarEqs} is time independent. Therefore, the real parts of the $N(N+1)$ eigenvalues $\lambda$ of $J$ are the Lyapunov exponents of the synchronous state and hence determine its local stability. In the following proposition, we exploit the fact that $J$ contains a large diagonal block $-\epsilon \mathbb{I}_{N^2}$ to reduce the dimension of the eigenvalue problem for $J$.

\begin{prop}\label{prop:JacobianDimRed}
	Suppose $\phi_i = \Omega t$ is an in-phase synchronous state of~\eqref{eq:PO_DDP_phi}--\eqref{eq:PO_DDP_kappa}. Then its linear stability is determined by the $2N$-dimensional linear system
\begin{align}\label{eq:epsilonReduction}
	\frac{\mathrm{d}}{\mathrm{d}t}
\bm{v}
	=
	\begin{pmatrix}
		\mathrm{D}g(0)L^h & g(0) \mathbb{I}_N \\
		\epsilon L^{\mathrm{D}h} & -\epsilon \mathbb{I}_N
	\end{pmatrix}
\bm{v},
\end{align}
	where $\mathrm{D}g(0)$ and $L^h$ are as in~\eqref{eq:VarEqs} and the $N\times N$ weighted Laplacian matrix $L^{\mathrm{D}h}$ possesses the following elements
	\begin{align}\label{eq:POLaplacianDH}
		l^{\mathrm{D}h}_{ij}=\begin{cases}
			- \sum_{m=1,m\ne i}^N a_{im} \mathrm{D}h_{im}(0) , & i=j,\\
			a_{ij} \mathrm{D}h_{ij}(0), & i\ne j.
		\end{cases}
	\end{align}
\end{prop}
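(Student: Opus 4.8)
The plan is to exploit the fact that the phase variations $\bm\xi$ couple to the weight variations $\bm\chi$ only through the $N$-dimensional combination $B\bm\chi$, whose $i$-th entry is the weighted row sum $\sum_j a_{ij}\chi_{ij}$. Writing the eigenvalue problem $J(\bm\xi,\bm\chi)^T=\lambda(\bm\xi,\bm\chi)^T$ as the two block rows $\mathrm{D}g(0)L^h\bm\xi+g(0)B\bm\chi=\lambda\bm\xi$ and $-\epsilon C\bm\xi-\epsilon\bm\chi=\lambda\bm\chi$, I would introduce the reduced variable $\bm\eta:=B\bm\chi\in\mathbb{R}^N$ and seek a closed system for $(\bm\xi,\bm\eta)$. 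The first row already reads $\mathrm{D}g(0)L^h\bm\xi+g(0)\bm\eta=\lambda\bm\xi$, matching the top block of the claimed reduced matrix in~\eqref{eq:epsilonReduction}, so everything hinges on obtaining the correct evolution of $\bm\eta$.

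First I would left-multiply the weight equation by $B$, which produces $-\epsilon BC\bm\xi-\epsilon\bm\eta$ for the $\bm\eta$-dynamics; the task then reduces to the single matrix identity $BC=-L^{\mathrm{D}h}$. I would verify this by unpacking the two non-square block terms defining $C$: the block-diagonal transpose term contributes a summand $\mathrm{D}h_{ij}(0)\,\xi_i$ to $(C\bm\xi)_{ij}$, while the stacked-diagonal term contributes $-\mathrm{D}h_{ij}(0)\,\xi_j$, so that $(C\bm\xi)_{ij}=\mathrm{D}h_{ij}(0)(\xi_i-\xi_j)$. Applying $B$'s weighted row sum then gives $(BC\bm\xi)_i=\sum_{j\ne i}a_{ij}\mathrm{D}h_{ij}(0)(\xi_i-\xi_j)$, which is exactly $-(L^{\mathrm{D}h}\bm\xi)_i$ by the definition~\eqref{eq:POLaplacianDH} (the $j=i$ term drops since $\xi_i-\xi_i=0$). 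This yields the lower block $\epsilon L^{\mathrm{D}h}$ of~\eqref{eq:epsilonReduction} and is, I expect, the main technical obstacle, as it requires careful index bookkeeping through the non-square blocks $B$ and $C$.

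It remains to show that passing from the $N(N+1)$-dimensional problem to the $2N$-dimensional one discards only stable directions. Since the map $(\bm\xi,\bm\chi)\mapsto(\bm\xi,\bm\eta)$ is not injective, I would argue at the level of the characteristic polynomial rather than the flow. For $\lambda\ne-\epsilon$ the diagonal block $-(\epsilon+\lambda)\mathbb{I}_{N^2}$ is invertible, so a Schur-complement expansion of $\det(J-\lambda\mathbb{I})$, together with $BC=-L^{\mathrm{D}h}$, factors as
\[
\det(J-\lambda\mathbb{I})=(\lambda+\epsilon)^{N(N-1)}\,\det\!\left(\tilde J-\lambda\mathbb{I}_{2N}\right),
\]
where $\tilde J$ denotes the reduced $2N\times 2N$ matrix in~\eqref{eq:epsilonReduction} and the parity $(-1)^{N(N-1)}=1$ eliminates any stray sign. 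By polynomial identity this holds for all $\lambda$. Hence the spectrum of $J$ consists of the $2N$ eigenvalues of $\tilde J$ together with the eigenvalue $\lambda=-\epsilon$ of multiplicity $N(N-1)$; since $\epsilon>0$, these additional eigenvalues have negative real part and cannot destabilize the synchronous state. The linear stability is therefore governed entirely by the $2N$-dimensional system~\eqref{eq:epsilonReduction}, as claimed.
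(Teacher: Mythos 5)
Your proposal is correct, and it reaches the reduced system by a route that differs in its second half from the paper's. Both arguments rest on the same two observations: the phase perturbations feel the weight perturbations only through the $N$-dimensional combination $\bm{\eta}=B\bm{\chi}$, and the identity $BC=-L^{\mathrm{D}h}$ (your index computation $(C\bm{\xi})_{ij}=\mathrm{D}h_{ij}(0)(\xi_i-\xi_j)$ and hence $(BC\bm{\xi})_i=\sum_{j\ne i}a_{ij}\mathrm{D}h_{ij}(0)(\xi_i-\xi_j)=-(L^{\mathrm{D}h}\bm{\xi})_i$ is exactly right). Where you diverge is in justifying that the non-injective projection $(\bm{\xi},\bm{\chi})\mapsto(\bm{\xi},B\bm{\chi})$ loses only stable directions: the paper constructs an explicit change of coordinates $R$ built from $(1/r)B^T$ and an orthogonal complement $K$ with $BK=0$, bringing the Jacobian to block-triangular form so that the discarded $N^2-N$ directions appear as an explicit slave block with eigenvalue $-\epsilon$; you instead factor the characteristic polynomial via the Schur complement, obtaining $\det(J-\lambda\mathbb{I})=(\lambda+\epsilon)^{N(N-1)}\det(\tilde J-\lambda\mathbb{I}_{2N})$ for $\lambda\ne-\epsilon$ and extending by polynomial identity, with the correct parity $(-1)^{N(N-1)}=1$. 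Your spectral argument is more self-contained and slightly more general: it does not require the block $[(1/r)B^T\;K]$ to be invertible (which implicitly needs $BB^T$ to be a scalar multiple of the identity, i.e., constant row sums of $a_{ij}^2$), and it makes the count and location of the discarded eigenvalues explicit. The paper's coordinate construction, in exchange, yields actual invariant subspaces and the slaved dynamics, not just the spectrum. Either way the conclusion is the same, since the extra eigenvalue $-\epsilon$ has negative real part and cannot affect stability.
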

\begin{proof}
We remind that system~(\ref{eq:VarEqs}) determines the spectrum (Lyapunov exponents) of the synchronous state. The Jacobian matrix in~(\ref{eq:VarEqs}) is sparse with a large $N^2\times N^2$ block given by the simple diagonal matrix $-\epsilon\mathbb{I}_{N^2}$. This implies that~(\ref{eq:VarEqs}) possess $N^2-N$ stable directions with Lyapunov exponents $-\epsilon$. To find these directions, we substitute 
$(\bm{\xi}, \bm{\chi})=e^{-\epsilon t}(\bm{\xi}_0 , \bm{\chi}_0)$ into (\ref{eq:VarEqs}) and obtain the linear system 
\begin{align}
	\label{matrix000}
	\begin{pmatrix}
		\mathrm{D}g(0) L^h + \epsilon \mathbb{I}_N & g(0) B  \\ 
		-\epsilon C & 0
	\end{pmatrix}
	\begin{pmatrix}
		{\bm{\xi}_0} \\
		{\bm{\chi}_0}
	\end{pmatrix} = 0.
\end{align}
This system has at least $N^2-N$ linearly independent solutions, since the matrix in~\eqref{matrix000} is degenerate due to the large $N^2\times N^2$ zero block. The structure of the invariant subspaces in system~(\ref{eq:VarEqs}) allows for introducing new coordinates, which separate the $N^2-N$ stable directions (corresponding to the eigenvalues $-\epsilon$) from the remaining $2N$ directions. Explicitly, this transformation is given by 
$$
\begin{pmatrix}
{\bm{\xi}} \\
{\bm{\chi}}
\end{pmatrix}
=
R\begin{pmatrix}
{{\bm{\xi}}} \\
\hat{{\bm{\chi}}}
\end{pmatrix},\quad 
	R = \begin{pmatrix}
\mathbb{I}_{N} & 0 & 0\\
0 & (1/r)B^T & K
\end{pmatrix}
$$
with
$(N^2+N)\times (N^2+N)$ matrix $R$. Here $K$ is an $(N^2-N)\times (N^2-N)$ orthogonal matrix with $B K=0$. 
Applying this transformation, we obtain the
following system
\begin{align}\label{eq:epsilonReduction-1}
	\frac{\mathrm{d}}{\mathrm{d}t}
	\begin{pmatrix}
		\bm{\xi} \\
		\bm{\chi}_N \\
		\bm{\chi}_{N^2-N}
	\end{pmatrix}
	=
	\begin{pmatrix}
		\mathrm{D}g(0)L^h & g(0) \mathbb{I}_N & 0 \\
		\epsilon L^{\mathrm{D}h} & -\epsilon \mathbb{I}_N & 0 \\
	-\epsilon K^T C & 0 & -\epsilon \mathbb{I}_{N^2-N}		
	\end{pmatrix}
	\begin{pmatrix}
	\bm{\xi} \\
	\bm{\chi}_N \\
	\bm{\chi}_{N^2-N}
\end{pmatrix},
\end{align}
where
$(\bm{\xi}, \bm{\chi}_N , \bm{\chi}_{N^2-N})^T =
(\bm{\xi} , \bm{\hat \chi} )^T,$
with ${\bm{\chi}}_N$ and $\bm{\chi}_{N^2-N}$ are an $N$ and $N^2-N$-dimensional vectors, respectively, and the $N\times N$ weighted Laplacian matrix $L^{\mathrm{D}h}$ as given in~\eqref{eq:POLaplacianDH}. 
For more details on the transformation, we refer the reader to~\cite{BER20b,VOC21}. 
We observe that the variables $(\bm{\xi}, \bm{\chi}_N)$ are independent on $\bm{\chi}_{N^2-N}$. Hence, separating the master from the slave system, the resulting coupled differential equations that determine the stability of the synchronous state are given by system \eqref{eq:epsilonReduction}. This concludes the proof.
\end{proof}

Proposition \ref{prop:JacobianDimRed} reduces the problem's dimension significantly from $N(N+1)$ to $2N$. In the spirit of the master stability approach~\cite{PEC98}, we aim for further decomposition of the $2N$-dimensional coupled system~\eqref{eq:epsilonReduction} into dynamically independent blocks of dimension $2$. For this, we restrict our consideration to the case when $L^h$ can be diagonalized $S^h= Q^{-1} L^h Q$ by a nonsingular complex-valued matrix $Q$. Note that the eigenvalues $\mu_i$ of $L^h$ lie on the diagonal of $S^h$. In general, the matrices $L^h$ and $L^{\mathrm{D}h}$ do not commute. Therefore, $Q^{-1} L^{\mathrm{D}h}Q$ is not necessarily of upper triangular shape. Regardless of this fact, the following proposition provides an explicit form for the eigenvalues of $J$ in~\eqref{eq:VarEqs} in the limit of slow adaptation, i.e., $\epsilon \ll 1$. 

\begin{prop}\label{prop:JacobianEVApprox}
	Assume that $L^h$ is diagonalizable, with $S^h= Q^{-1} L^h Q$ being the associated diagonal matrix and $Q$ the corresponding transformation. Let $\phi_i = \Omega t$ be an in-phase synchronous state of~\eqref{eq:PO_DDP_phi}--\eqref{eq:PO_DDP_kappa}. Then, the local stability of this state is determined by the solutions of $N$ quadratic equations, which are given up to the first order in $\epsilon$ as
	\begin{align}\label{eq:masterStabEquation}
		\lambda^2 + (\epsilon-\mathrm{D}g(0)\mu_i)\lambda-\epsilon\left(\mathrm{D}g(0)\mu_i + g(0) \nu_i \right) = 0,  \quad i=1,\dots,N,
	\end{align}
	where $\mu_i$ are the eigenvalues of $L^h$ located on the diagonal of $S^h$ and $\nu_i$ are the corresponding diagonal elements of $Q^{-1} L^{\mathrm{D}h}Q$. 
	
	If $L^h$ and $L^{\mathrm{D}h}$ commute,  then Eq.~\eqref{eq:masterStabEquation} is exact, and $\nu_i$ are the eigenvalues of $L^{\mathrm{D}h}$.
\end{prop}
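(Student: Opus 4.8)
The plan is to work with the reduced $2N$-dimensional system \eqref{eq:epsilonReduction} furnished by Proposition~\ref{prop:JacobianDimRed} and to compute its characteristic polynomial after partially diagonalizing it. First I would apply the block-diagonal similarity transformation $\mathrm{diag}(Q,Q)$ to the coefficient matrix in \eqref{eq:epsilonReduction}. Since $Q^{-1}L^hQ=S^h$ is diagonal, while $g(0)\mathbb{I}_N$ and $-\epsilon\mathbb{I}_N$ are scalar and hence invariant under conjugation, this brings the matrix to the form with top-left block $\mathrm{D}g(0)S^h$, off-diagonal blocks $g(0)\mathbb{I}_N$ and $\epsilon T$, and bottom-right block $-\epsilon\mathbb{I}_N$, where $T:=Q^{-1}L^{\mathrm{D}h}Q$ is in general a full matrix. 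As the similarity leaves the spectrum unchanged, the eigenvalues $\lambda$ we seek are the zeros of $\det(\widetilde M-\lambda\mathbb{I}_{2N})$ for this transformed matrix $\widetilde M$.

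The key computational step is that the lower-right block $-(\epsilon+\lambda)\mathbb{I}_N$ of $\widetilde M-\lambda\mathbb{I}_{2N}$ is a scalar multiple of the identity, so it commutes with the lower-left block $\epsilon T$. This lets me apply the block determinant identity $\det\!\begin{pmatrix}A&B\\ C&D\end{pmatrix}=\det(AD-BC)$, valid when $CD=DC$, collapsing the $2N$-dimensional determinant to the $N\times N$ determinant $\det G(\lambda)$ with
\[ G(\lambda)=-(\epsilon+\lambda)\bigl(\mathrm{D}g(0)S^h-\lambda\mathbb{I}_N\bigr)-\epsilon g(0)\,T. \]
Writing $T=\mathrm{diag}(\nu_1,\dots,\nu_N)+T'$ with $T'$ the off-diagonal part, I would split $G=G_0+G_{\mathrm{off}}$ into its diagonal part $G_0$ and $G_{\mathrm{off}}=-\epsilon g(0)T'$. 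A direct expansion shows that the $i$-th diagonal entry of $G_0$ equals exactly $\lambda^2+(\epsilon-\mathrm{D}g(0)\mu_i)\lambda-\epsilon(\mathrm{D}g(0)\mu_i+g(0)\nu_i)$, i.e.\ the left-hand side of \eqref{eq:masterStabEquation}.

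To obtain the first-order statement I would argue that $G_{\mathrm{off}}=O(\epsilon)$ and, crucially, has \emph{zero diagonal}. Expanding $\det(G_0+G_{\mathrm{off}})$ multilinearly, the term linear in $G_{\mathrm{off}}$ vanishes, because every cofactor of an off-diagonal entry of the diagonal matrix $G_0$ is zero; hence the off-diagonal coupling between the $N$ quadratics enters the characteristic polynomial only through products of at least two entries of $G_{\mathrm{off}}$, i.e.\ at order $\epsilon^2$. Therefore, to first order in $\epsilon$, $\det G(\lambda)=\prod_i G_{0,ii}(\lambda)$, and the $2N$ eigenvalues are the roots of the $N$ decoupled quadratics \eqref{eq:masterStabEquation}. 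For the commuting case I would use that two commuting matrices with $L^h$ diagonalizable can be brought to simultaneous (block-)triangular form; choosing $Q$ accordingly makes $T$ upper triangular with the eigenvalues of $L^{\mathrm{D}h}$ on its diagonal, so $T'$ is strictly upper triangular, $G$ is triangular, and $\det G=\prod_i G_{0,ii}$ holds exactly with $\nu_i$ equal to those eigenvalues.

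The hard part will be making the phrase ``up to first order in $\epsilon$'' precise for the slow family of eigenvalues. Each quadratic in \eqref{eq:masterStabEquation} has, for fixed $i$, one root of order one (near $\mathrm{D}g(0)\mu_i$) and one root of order $\epsilon$. For the order-one roots the $O(\epsilon^2)$ correction shifts the root only by $O(\epsilon^2)$, since the relevant derivative of $\prod_i G_{0,ii}$ is $O(1)$ there. For the order-$\epsilon$ roots, however, each diagonal factor $G_{0,ii}$ is itself $O(\epsilon)$ at $\lambda=O(\epsilon)$, so the nominally $O(\epsilon^2)$ off-diagonal contribution is of the same order as the leading product; a clean justification therefore requires the rescaling $\lambda=\epsilon s$, after which $\det G$ becomes $\epsilon^N\det H(s)+O(\epsilon^{N+1})$ for an explicit $O(1)$ matrix $H(s)$, and one must argue in what sense the diagonal truncation of $H(s)$ captures the slow spectrum. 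I expect this singular-perturbation bookkeeping, rather than the determinant algebra, to be the delicate step, and I would address it either by restricting the precise meaning of ``first order'' to the coefficients of the decoupled quadratics or by supplementing the argument with the $\lambda=\epsilon s$ analysis.
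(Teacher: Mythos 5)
Your proposal follows essentially the same route as the paper: conjugate the reduced $2N\times 2N$ system of Proposition~\ref{prop:JacobianDimRed} by $\mathrm{diag}(Q,Q)$, collapse the block determinant to the $N\times N$ determinant $\det G(\lambda)$ via the Schur complement (your commuting-blocks identity is that same step), and observe via the Leibniz/multilinear expansion that every contribution involving the off-diagonal part of $Q^{-1}L^{\mathrm{D}h}Q$ carries at least two factors of $\epsilon$, so that $\det G(\lambda)=\prod_i G_{0,ii}(\lambda)+\mathcal{O}(\epsilon^2)$, with the commuting case handled by a simultaneous transformation exactly as in the paper. The only place you go beyond the paper is your final paragraph: the paper performs no $\lambda=\epsilon s$ rescaling and implicitly reads ``up to first order in $\epsilon$'' as a statement about the coefficients of the characteristic polynomial rather than about its roots, so the delicate point you raise --- that for the $N$ slow roots each diagonal factor is itself $\mathcal{O}(\epsilon)$ and the nominally higher-order off-diagonal terms compete with the leading product --- is legitimate but is not addressed in the paper's proof either.
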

\begin{proof}
Due to Proposition~\ref{prop:JacobianDimRed}, the eigenvalues of the Jacobian in~\eqref{eq:VarEqs} are given by
\begin{align*}
	\det\begin{pmatrix}
		\mathrm{D}g(0)L^h-\lambda\mathbb{I}_N & g(0) \mathbb{I}_N \\
		\epsilon L^{\mathrm{D}h} & -(\epsilon+\lambda) \mathbb{I}_N
	\end{pmatrix}=
	\det\begin{pmatrix}
		\mathrm{D}g(0) S^h-\lambda\mathbb{I}_N & g(0) \mathbb{I}_N \\
		\epsilon Q^{-1} L^{\mathrm{D}h}Q & -(\epsilon+\lambda) \mathbb{I}_N
	\end{pmatrix} = 0,
\end{align*}
where we have used the transformation $Q$ that brings $L^h$ to the diagonal form $S^h= Q^{-1} L^h Q$.
Making further use of the Schur complement~\cite{LIE15},
%compare also with similar approaches in~\cite{BER19,BER19a,BER20}, 
we obtain 
\begin{align}\label{eq:JacobianEVApproxAux1}
	\det\begin{pmatrix}
		\mathrm{D}g(0) S^h-\lambda\mathbb{I}_N & g(0) \mathbb{I}_N \\
		\epsilon Q^{-1} L^{\mathrm{D}h}Q & -(\epsilon+\lambda) \mathbb{I}_N
	\end{pmatrix} = \det\begin{pmatrix}
		(\lambda+\epsilon)\left(\lambda\mathbb{I}_N-\mathrm{D}g(0) S^h\right) - \epsilon g(0) Q^{-1} L^{\mathrm{D}h}Q \end{pmatrix} = 0.
\end{align}
The latter equation is almost diagonal. The only off-diagonal components remain from $Q^{-1} L^{\mathrm{D}h}Q$ and scale with $\epsilon$. Let us consider the Leibniz formula for the determinant of an $N\times N$ matrix $F$ with entries $f_{ij}$, that reads $\det(F)=\sum_{\sigma\in \mathrm{Perm}(N)}\mathrm{sgn}(\sigma)\prod_{i=1}^{N}f_{i\sigma(i)}$. In the latter expression $\mathrm{Perm}(N)$ denotes the set of all permutations $\sigma$ of the integer numbers $1,\dots,N$ and $\mathrm{sgn}(\sigma)\in\{-1,1\}$ is the sign of the permutation. Since all off-diagonal terms of the matrix considered in~\eqref{eq:JacobianEVApproxAux1} scale with $\epsilon$, for any but the identical permutation each term $\prod_{i=1}^{N}f_{i\sigma(i)}$ scales with $\epsilon^2$ or higher. Hence, we are left with $\det(F)=\prod_{i=1}^{N}f_{ii} + \mathcal{O}(\epsilon^2)$ and find 
\begin{multline}\label{eq:JacobianEVApproxAux2}
	\det\begin{pmatrix}
		(\lambda+\epsilon)\left(\lambda\mathbb{I}_N-\mathrm{D}g(0) S^h\right) - \epsilon g(0) Q^{-1} L^{\mathrm{D}h}Q \end{pmatrix} \\ 
	= \prod_{i=1}\left(\lambda^2 + (\epsilon-\mathrm{D}g(0)\mu_i)\lambda-\epsilon\left(\mathrm{D}g(0)\mu_i + g(0) \nu_i \right)\right) + \mathcal{O}(\epsilon^2) = 0,
\end{multline}
where $\mu_i$ are the eigenvalues of $L^h$, $\nu_i$ are the diagonal elements of $Q^{-1} L^{\mathrm{D}h}Q$ and $\mathcal{O}(\epsilon^2)$ denotes higher order terms ($\epsilon^m, m>1$). If $L^h$ and $L^{\mathrm{D}h}$ commute, both matrices share the same set of eigenvectors and hence they can be brought to the diagonal form with the same transformation $Q$. In this case, the diagonal elements $\nu_i$ are the eigenvalues of $L^{\mathrm{D}h}$ and the higher order terms $\mathcal{O}(\epsilon^2)$ in~\eqref{eq:JacobianEVApproxAux2} vanish.
\end{proof}

The $2N$ solutions $\lambda_i$ of the $N$ equations~\eqref{eq:masterStabEquation} determine the stability of the synchronous state. More precisely, the real parts of theses solutions determine the Lyapunov exponents. 
If $\Lambda=\max_i \mathrm{Re}(\lambda_i)<0$, then the synchronous state is locally stable, while for $\Lambda>0$ it is locally unstable. The case $\Lambda=0$ provides the stability boundary. 
%Strictly speaking, in the case of diagonalizable $L^h$, as in Proposition~\ref{prop:JacobianEVApprox}, the case $\Lambda=0$ corresponds also to stability, but with a polynomial decay of the synchronization error, instead of exponential.\rem{RB}{This, I do not understand}

Note that for a fixed time scale parameter $\epsilon\ll 1$, the equation~\eqref{eq:masterStabEquation} and hence its solutions depend on the coupling function $g$, the connectivity, and the adaptation structure. This dependence, however, is only encoded in the two complex parameters $\mathrm{D}g(0)\mu$ and $g(0)\nu$. Therefore, we define the master stability function $\Lambda:\mathbb{C}^2\to\mathbb{R}$ with $\Lambda(\mathrm{D}g(0)\mu,g(0)\nu)=\max_i \mathrm{Re}(\lambda_i(\mathrm{D}g(0)\mu,g(0)\nu))$ that maps each pair of parameters $(\mathrm{D}g(0)\mu,g(0)\nu)$ to the corresponding Lyapunov exponent.

\begin{figure}
	\centering
	\includegraphics{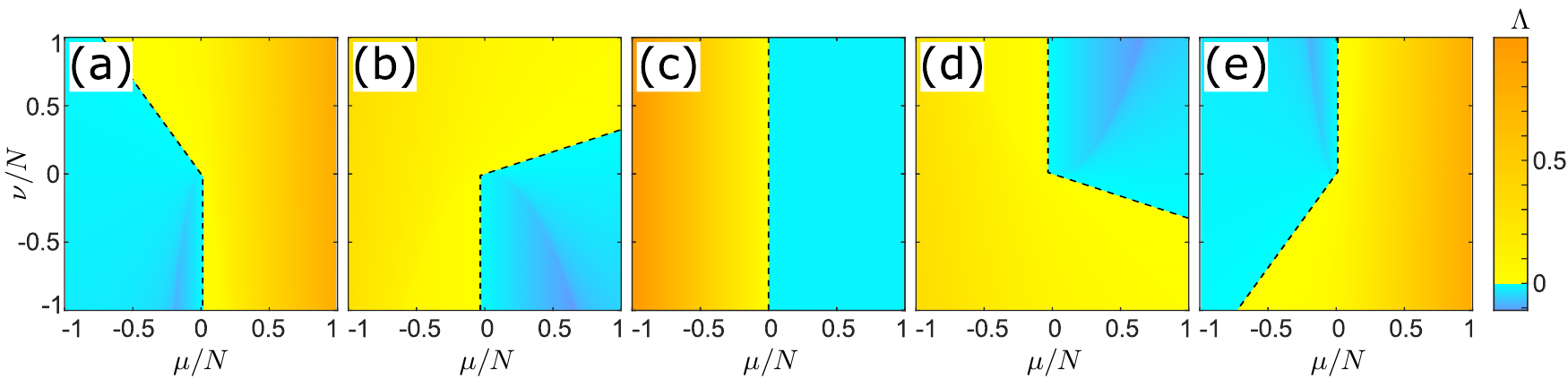}
	\caption{The master stability function $\Lambda(\mathrm{D}g(0)\mu,g(0)\nu)$ for the
        coupling function $g(\phi) =-\sin(\phi+\alpha)/N$ and real $\mu$ and $\nu$ ($\mathrm{Im}(\mu)=0$, $\mathrm{Im}(\nu)=0$). The values of the master stability function are color-coded in all panels (a--e). The dashed black line describes the border between regions corresponding to local stability and instability, respectively. Parameters: $\epsilon=0.01$, (a) $\alpha=-0.8\pi$, (b) $\alpha=-0.4\pi$, (c) $\alpha=0$, (d) $\alpha=0.4\pi$, and (e) $\alpha=0.8\pi$.}\label{fig:MSF}
\end{figure}
For an illustration, we consider a cross-section of $(\mathrm{D}g(0)\mu,g(0)\nu)$-space by setting $\mathrm{Im}(\mu)=0$ and $\mathrm{Im}(\nu)=0$. This cross-section is of particular interest in cases of symmetric matrices $L^h$ and $L^{\mathrm{D}h}$ since their eigenvalues are real. In figure~\ref{fig:MSF}, we present the master stability function for the coupling function $g(\phi)=-\sin(\phi+\alpha)/N$ and different values of the parameter $\alpha$. In case of real $\mu$ and $\nu$, we obtain two explicit stability conditions from~\eqref{eq:masterStabEquation}: The synchronous state is locally stable ($\Lambda<0$) if 
\begin{align}
	c_1(\alpha,\mu)&=\cos(\alpha)\mu >-\epsilon, \label{eq:synyStabCrit1}\\
	c_2(\alpha,\mu,\nu)&=\cos(\alpha)\mu + \sin(\alpha) \nu > 0. \label{eq:synyStabCrit2}
\end{align}
These conditions agree with the black dashed lines in figure~\ref{fig:MSF} and are used subsequently to describe stability for certain network models.

%--------------------------------------------------
% Synchronization on networks with distance dependent plasticity
%--------------------------------------------------
\section{Synchronization on networks with distance dependent plasticity}\label{sec:systDDP}

In the previous section, we established a generic analytic tool for studying stability of synchronous states. In this section, we focus on the application of the tool to certain network models. For the rest of the work, we restrict our attention to the following generalization of the Kuramoto-Sakaguchi system with distance-dependent synaptic plasticity
\begin{align}
	\frac{\mathrm{d}}{\mathrm{d}t}{{\phi}}_i &= \omega -\frac{1}{N} \sum_{j=1}^N a_{ij} \kappa_{ij} \sin(\phi_i - \phi_j+\alpha) \label{eq:AKS_DDP_phi}, \\
	\frac{\mathrm{d}}{\mathrm{d}t}{\kappa}_{ij} &= -\epsilon\left(\kappa_{ij}+h(\phi_i-\phi_j,d_{ij})\right). \label{eq:AKS_DDP_kappa}
\end{align}
The plasticity function $h$ depends on the phase difference $\phi_i-\phi_j$ and on the distance $d_{ij}$. In this work, we associate the distance to the difference of indices by $d_{ij}=|j-i|$. For the plasticity function, we consider
\begin{align}\label{eq:adaptationfunctionSymmetricOnRing}
	h_{ij}(\phi) = h(\phi,\frac{d_{ij}}{N}) = \begin{cases}
		\hat{h}(\phi,\frac{d_{ij}}{N}) & d_{ij} \le N/2,\\
		\hat{h}(\phi,1-\frac{d_{ij}}{N}) & d_{ij} > N/2.
	\end{cases}
\end{align}
With this form of the adaptation function, we have a symmetric $h_{ij}(\phi)=h_{ji}(\phi)$ and a circulant $h_{i+l,j+l}(\phi)=h_{ij}(\phi)$ structure of the corresponding matrix with entries $h_{ij}$. Particularly, for the numerical analysis, we use
\begin{equation}
\label{eq:plasticity}
\hat{h}(\phi,d_{ij}/N)=\sin(\phi+\beta(d_{ij}/N)),
\end{equation}
 where the distance dependence is encoded in the phase shift function
\begin{align}\label{eq:STDP_DDP_betaShift}
	\beta\left(\frac{d_{ij}}{N}\right) = \begin{cases}
		\left(\frac{2}{N}d_{ij}-1\right)\pi, & N \text{ even},\\
		\left(\frac{2}{(N+1)}d_{ij}-1\right)\pi, & N \text{ odd}.
	\end{cases}
\end{align}
\begin{figure}
	\centering
	\includegraphics{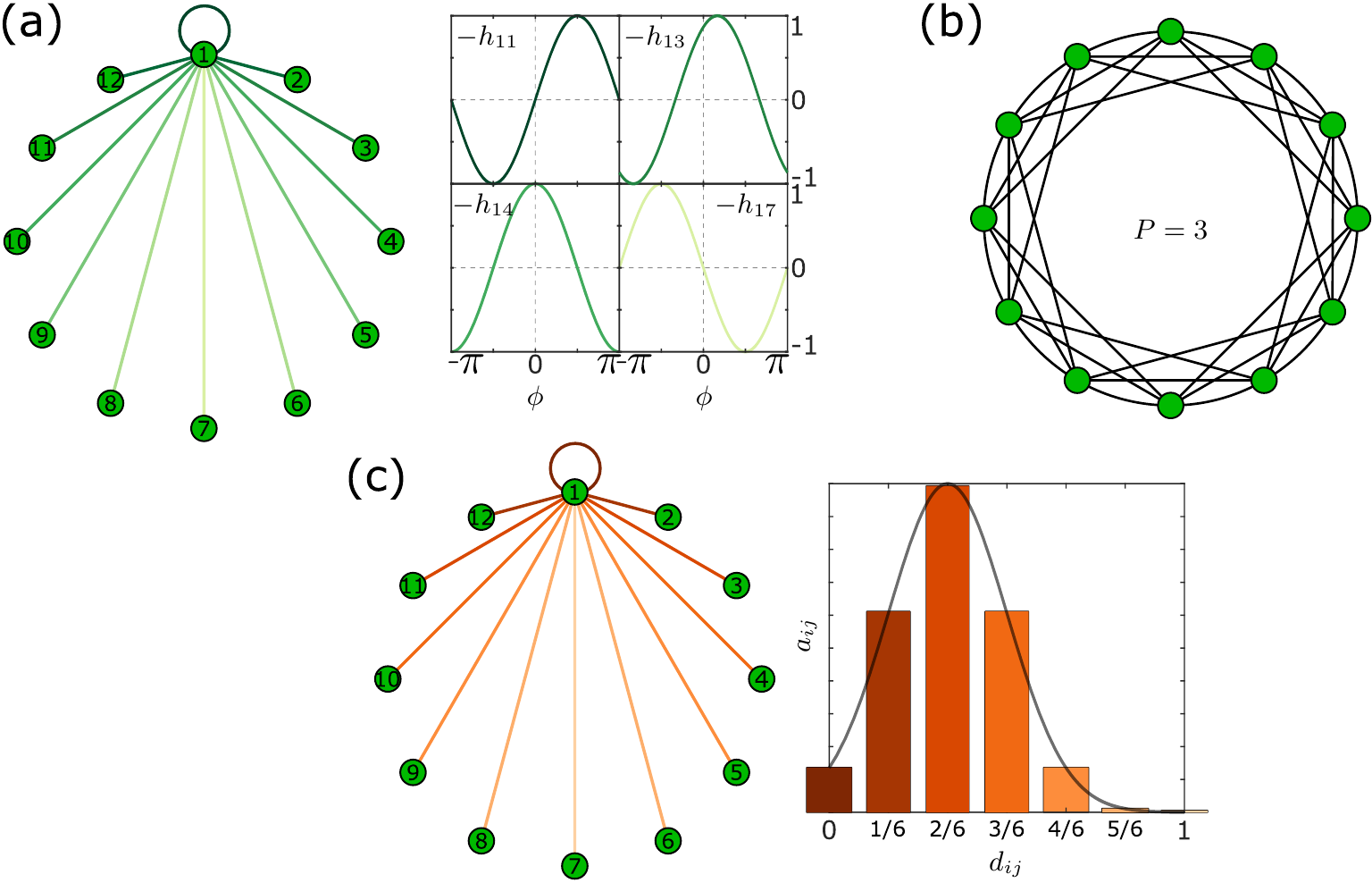}
	\caption{Panel (a) shows the plasticity function $h_{1j}$ given in ~\eqref{eq:adaptationfunctionSymmetricOnRing}--\eqref{eq:STDP_DDP_betaShift} depending on the distance $d_{1j}$ exemplified for node $i=1$ in a network with $N=12$ nodes. Note that the colors of the links in the network (left) correspond to the colors of the depicted plasticity function (right). Panel (b) displays the connectivity structure of a nonlocally coupled ring network with $N=12$ nodes and a coupling range $P=3$. Panel (c) displays the weighted connectivity structure of a network with $N=12$ nodes (left) with distance dependent Gaussian weight distribution (right).  Note that the colors of the links in the network (left) correspond to the colors of the bars in the weight distribution (right).\label{fig:illustrationDDP}}
\end{figure}

In figure~\ref{fig:illustrationDDP}(a), we illustrate the distance-dependent plasticity function ~\eqref{eq:adaptationfunctionSymmetricOnRing}--\eqref{eq:STDP_DDP_betaShift} for a network of $N=12$ nodes. The illustration shows the different plasticity functions depending on the distance between the nodes $d_{ij}$. The plasticity function changes from a Hebbian to anti-Hebbian rule for proximal and distal node, respectively. This change, particularly in the proximity of $\phi=0$, is in qualitative agreement with the experimental findings in~\cite{LET06}. Note the symmetry of the plasticity function 
%at $|j-1|=N/2$ 
that renders the matrix with elements $h_{ij}$ circulant.

If not indicated differently, we consider the coupling structure given by
\begin{equation}
	a_{ij} = a(d_{ij}/N),\label{eq:ringLike}
\end{equation}
where $a: [0,1]\to [0,1]$ is a bounded and piece-wise continuous function. This corresponds to a distant-dependent coupling, and it results to a dihedral symmetry in the coupling structure (ring-like).

In the following section, we provide an approximation for the eigenvalues of $L^h$ and $L^{\mathrm{Dh}}$ for large networks with circulant connectivity and plasticity structure. Using this approximation, we subsequently analyze the stability of the synchronous state on nonlocally coupled networks and on isotropic networks with Gaussian weight distribution.

\subsection{Approximation of the eigenvalues for large systems with circulant structure}\label{sec:EVApprox}

In the previous part, we have defined the plasticity functions $h_{ij}$ in such a way that the structures of $L^h$ and $L^{\mathrm{D}h}$ inherit important properties from the underlying network structure $a(d_{ij}/N)$. In particular, assuming that the adjacency matrix is circulant, renders $L^h$ and $L^{\mathrm{D}h}$ to be circulant, as well. 

In this section, we briefly recall how one can derive the eigenvalues $\mu_k$ and $\nu_k$ ($k=0,\dots,N-1$) in case of a circulant structure. It is well-known that for a circulant matrix the eigenvalues are determined by applying a discrete Fourier approach~\cite{GRA06}. More precisely, suppose $L$ is a circulant $N\times N$ matrix where the elements of the first row are given by the entries $l_j$ with $j=1,\dots,N$. Then the $k$th eigenvalue is explicitly given by
\begin{align*}
	\mu_k = l_1+\sum_{j=2}^{N}l_j \exp\left(\mathrm{i}\frac{2\pi}{N}(j-1)k\right).
\end{align*}
For the case of $L^h$ as in~\eqref{eq:POLaplacianH}, $a_{ij}$ and $h_{ij}$ as in~\eqref{eq:ringLike} and \eqref{eq:adaptationfunctionSymmetricOnRing}, we obtain
\begin{align}\label{eq:EVMu_circulantExplicit}
	\mathrm{Re}(\mu_k) = \mathrm{Re}(l_{11}^h) + \frac{1}{N}\sum_{j=2}^{N} a(x_j) h(0,x_{j})  \cos\left(2\pi x_{j}k\right),
\end{align}
with $x_j=d_{1j}/N$ and  $\mathrm{Re}(l_{11}^h)=-\frac{1}{N}\sum_{j=2}^{N} a(x_j) h(0,x_{j})$. Since the adjacency matrix $A$ is assumed to be symmetric, the eigenvalues of $L^h$ are real. Therefore, we omit considering the imaginary part of $\mu_k$. Equation~\eqref{eq:EVMu_circulantExplicit} provides exact expressions for the eigenvalues. However, the values depend on the total number of oscillators $N$ that makes it harder to study the influence of other system properties, such as the coupling structure or the plasticity function. To remove this $N$-dependence, we consider the continuum limit $N\to\infty$ (compare with~\cite{AOK11}) and obtain
\begin{align*}
	\mathrm{Re}(\mu_k) = \mathrm{Re}(l_{11}^h) + \int_{0}^{1}a(x) h(0,x) \cos\left(2\pi x k\right)\mathrm{d}x,
\end{align*}
Due to the definition of $h$ and the symmetry of $a(x)$, we find
\begin{align}\label{eq:EVApproxMu_general}
	\mathrm{Re}(\mu_k) = 2\int_{0}^{1/2} a(x) h(0,x) (\cos\left(2\pi x k\right)-1)\mathrm{d}x
\end{align}
for any $k$. This explicit expression allows studying the distribution of the eigenvalues $\mu_k$ for a given plasticity function $h$ and coupling structure $a$. Note that a similar expression as~\eqref{eq:EVApproxMu_general} can be analogously derived for the eigenvalues of $L^{\mathrm{D}h}$ and reads
\begin{align}\label{eq:EVApproxNu_general}
	\mathrm{Re}(\nu_k) = 2\int_{0}^{1/2} a(x)\mathrm{D}h(0,x) (\cos\left(2\pi x k\right)-1)\mathrm{d}x.
\end{align}
We note that $\mu_0=\nu_0=0$ due to the Laplacian structure of $L^h$ and $L^{\mathrm{D}h}$. 

The results from Eqs.~\eqref{eq:EVApproxMu_general} and~\eqref{eq:EVApproxNu_general} are applied in the next sections to analyze different networks.

\subsection{Synchronization on nonlocally coupled ring networks}\label{sec:NonLocalRing}

In this section, we analyze the effect of long distance connections on the stability of synchronous states in nonlocally coupled ring networks. We consider the coupling structure given by
\begin{equation}
	a_{ij} = a(d_{ij}/N)=\begin{cases}
		1\quad\mbox{for}\ \ 0<d_{ij}\le P,\\
		1\quad\mbox{for}\ \ 0<N-d_{ij}\le P,\\
		0\quad\mbox{otherwise.}
	\end{cases}\label{eq:ring}
\end{equation}
This means that any two oscillators are coupled if they are separated at most by the coupling range $P$. The coupling Eq.~(\ref{eq:ring}) defines a nonlocal ring structure with coupling range $P$ to each side and two special limiting cases: local ring for $P=1$ and globally coupled network for $P=N/2$ (if $N$ is even, else $P=(N+1)/2$). The matrix of the form (\ref{eq:ring}) is circulant~\cite{GRA06} and has constant row sum, i.e., $\sum_{j=1}^{N}a_{ij}=2P$ for all $i=1,\dots,N$. An illustration for $N=12$ adn $P=3$ is presented in figure~\ref{fig:illustrationDDP}(b). 

In order to study the influence of the coupling range, we use the approximations for the eigenvalues $\mu_k$ and $\nu_k$ derived in section~\ref{sec:EVApprox}. The nonlocally coupled ring structure is expressed by the piecewise continuous function $a(x)=0$ for $p<x<1-p$ and $a(x)=1$ otherwise with relative coupling range $p=P/N$. Thus, for a nonlocally coupled ring~\eqref{eq:ring} and plasticity function~\eqref{eq:adaptationfunctionSymmetricOnRing}--\eqref{eq:STDP_DDP_betaShift}, we find
%\begin{align*}
%	\mathrm{Re}(l_{11}^h)&=\left(\int_{0}^{p} \sin(2\pi x) \mathrm{d}x-\int_{1-p}^{1} \sin(2\pi x) \mathrm{d}x\right) \\
%	&=2\int_{0}^{p} \sin(2\pi x) \mathrm{d}x=\frac{(1-\cos(2\pi p))}{\pi}.
%\end{align*}
\begin{multline}\label{eq:EVApproxMu_NonLocalRing}
	\mathrm{Re}(\mu_k)=-2\int_{0}^{p} \sin(2\pi x) (\cos\left(2\pi k x \right)-1)\mathrm{d}x\\
	=\frac{(1-\cos(2\pi p))}{\pi}+\frac{1}{\pi}\begin{cases}
		\frac12 (\cos^2(2\pi p)-1) & k=1 \\
		\frac{1}{(1-k^2)}(k \sin(2\pi p) \sin(2\pi k p)+\cos(2\pi p) \cos(2\pi k p)-1) & k\ne 1
	\end{cases}
\end{multline}
for the eigenvalues $\mu_k$ of $L^h$. Analogously, we obtain
\begin{multline}\label{eq:EVApproxNu_NonLocalRing}
	\mathrm{Re}(\nu_k)=-2\int_{0}^{p} \cos(2\pi x) (\cos\left(2\pi k x \right)-1)\mathrm{d}x\\
	=\frac{\sin(2\pi p)}{\pi}-\frac{1}{\pi}\begin{cases}
		p \pi+\frac{\sin(4\pi p)}{4} & k=1 \\
		\frac{1}{(1-k^2)}\left(\sin(2\pi p) \cos(2\pi k p)-k\cos(2\pi p) \sin(2\pi k p)\right) & k\ne 1
	\end{cases}
\end{multline}
for $\nu_k$ of $L^{\mathrm{D}h}$.

\begin{figure}
	\centering
	\includegraphics{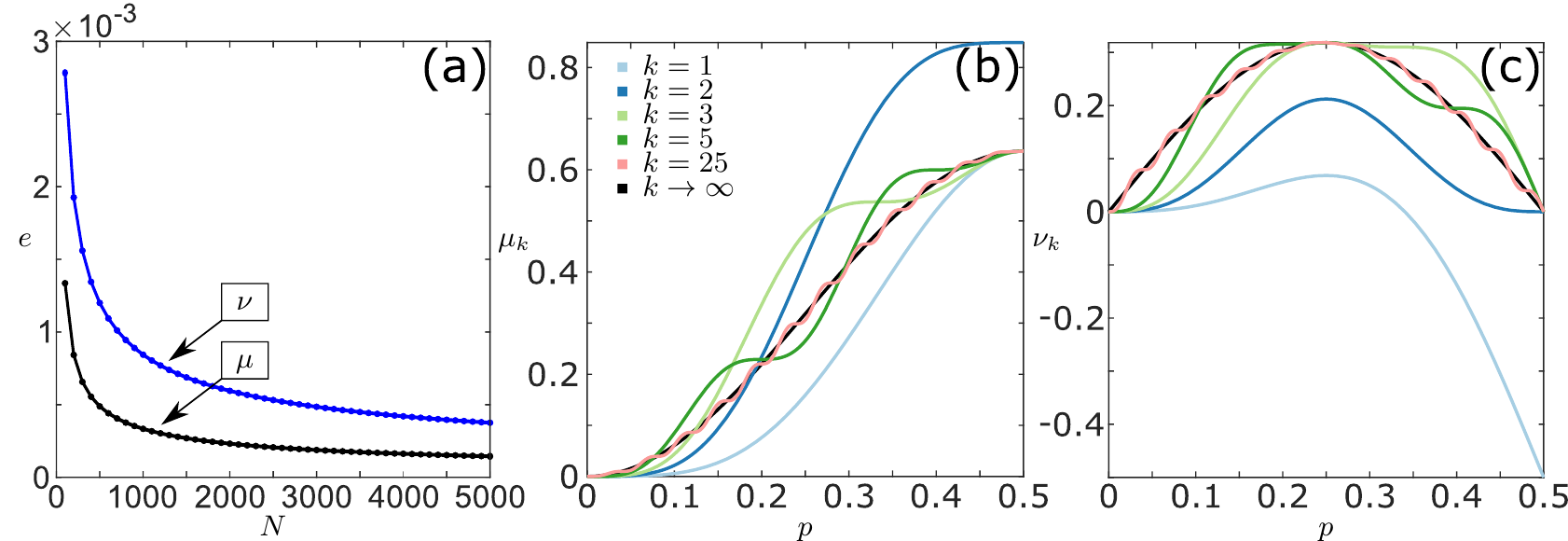}
	\caption{Panel (a) shows the errors $e(\mu)$ (black) and $e(\nu)$ (blue) with  $e(\gamma)=\sqrt{\frac{\sum_{k=0}^{N-1}(\gamma^{\text{exact}}_k-\gamma_k)^2}{N}}$ of the approximations ~\eqref{eq:EVApproxMu_NonLocalRing} and~\eqref{eq:EVApproxNu_NonLocalRing}, respectively, where $\gamma^{\text{exact}}_k$ are the exact eigenvalues derived by a discrete Fourier transformation, see~\eqref{eq:EVMu_circulantExplicit}. The errors are displayed in dependence of the system size $N$ (number of oscillators). The relative coupling range is set to $p=0.1$. Panel (b) and (c) show the approximated eigenvalues given by~\eqref{eq:EVApproxMu_NonLocalRing} and~\eqref{eq:EVApproxNu_NonLocalRing}, respectively, depending on the relative coupling range $p$ for different values of $k$.}\label{fig:EVApprox_Ring}
\end{figure}
In figure~\ref{fig:EVApprox_Ring}(a), we provide an error analysis of the approximations~\eqref{eq:EVApproxMu_NonLocalRing} and~\eqref{eq:EVApproxNu_NonLocalRing} compared to the exact eigenvalues given by Eq.~\eqref{eq:EVMu_circulantExplicit}. As expected, the errors tend to zero as the number of oscillators increases. Additionally in figures~\ref{fig:EVApprox_Ring}(b,c), we display $\mu_k$ and $\nu_k$ for several values of $k$ depending on the relative coupling range $p$. We observe that $\mu_k\ge 0$ for all $k$. This is due to given plasticity function~\eqref{eq:adaptationfunctionSymmetricOnRing}--\eqref{eq:STDP_DDP_betaShift}, for which the update is positive (or equal to zero) for all distances at $\phi=0$, i.e., $h(0,d_{ij})\ge 0$ for all $d_{ij}$. 

It is important to note, that our choice of the circulant adaptation functions imply that the matrices $L^h$ and $L^{\mathrm{D}h}$ are diagonalizable and commute. Hence, Proposition~\ref{prop:JacobianEVApprox} holds with the master stability equation~\eqref{eq:masterStabEquation} being exact. Therefore, the stability criterium \eqref{eq:synyStabCrit1} is also exact.

Combining the fact $\mu_k\ge 0$ with the stability criterium~\eqref{eq:synyStabCrit1}, we find $\cos(\alpha)>0$ as a necessary condition for the stability of the synchronous state for  $\epsilon\to 0$. This yields, that the synchronous state can be stable only for $\alpha\in(-\pi/2,\pi/2)$. In contrast to $L^h$, the $L^{\mathrm{D}h}$ is in general neither positive nor negative definite, hence the eigenvalues $\nu_k$ may take positive or negative values. This is due to the fact that the plasticity function may change sign at the origin, i.e., $\mathrm{D}h_{ij}$ may change signs depending on the distance $d_{ij}$. In particular, we find that only the eigenvalue $\nu_1$ changes the sign, see figure~\ref{fig:EVApprox_Ring}(c). This change may lead to a destabilization of the synchronous states as we show in the subsequent analysis. Finally, note that there exist $\mu_\infty=(1-\cos(2\pi p))/\pi$ and $\nu_\infty =-\sin(2\pi p)/\pi$ to which the eigenvalues converge for large values of $k$. These limits are displayed in figures~\ref{fig:EVApprox_Ring}(b,c) as black lines.

\begin{figure}
	\centering
	\includegraphics{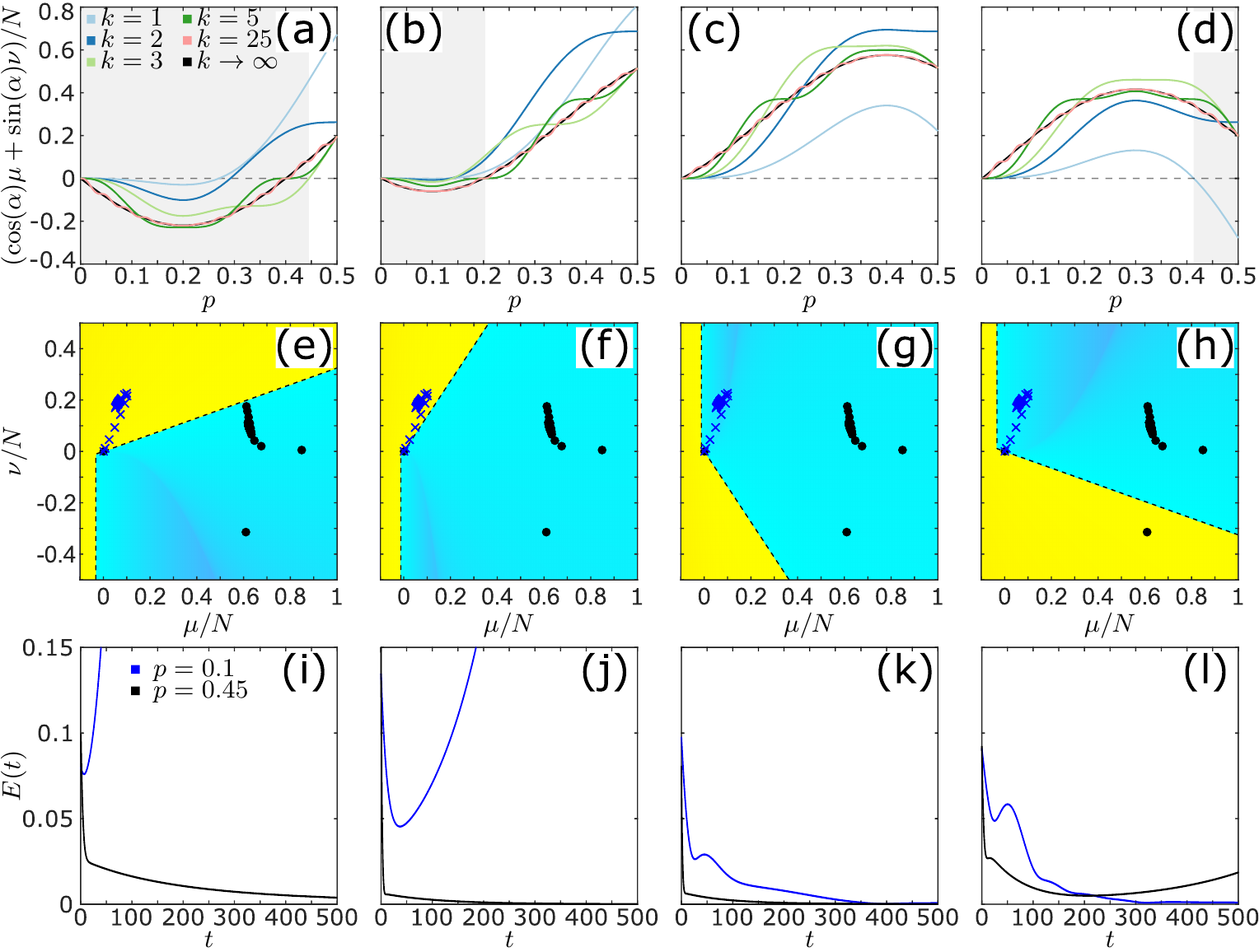}
	\caption{Stability analysis of the synchronous state of system~\eqref{eq:AKS_DDP_phi}--\eqref{eq:AKS_DDP_kappa} with plasticity rule~\eqref{eq:adaptationfunctionSymmetricOnRing}--\eqref{eq:STDP_DDP_betaShift} and coupling structure \eqref{eq:ring}. Panels (a,b,c,d) show  the function $c_2(\alpha,\mu_k(p),\nu_k(p))$ for different $\alpha$, see~\eqref{eq:synyStabCrit2}, calculated with the approximations~\eqref{eq:EVApproxMu_NonLocalRing} and~\eqref{eq:EVApproxNu_NonLocalRing} depending on the relative coupling range $p$. In each panel, $c_2$ is displayed for different values of $k$. The gray shaded regions refer to unstable synchronous states. Panels (e,f,g,h) show the master stability function $\Lambda(\mathrm{D}g(0)\mu,g(0)\nu)$ for the cross-section $\mathrm{Im}(\mu)=0$ and $\mathrm{Im}(\nu)=0$ for different values of $\alpha$ with color code as in figure~\ref{fig:MSF}. The crosses and dots correspond to two sets of eigenvalue pairs $(\mu_k,\nu_k)$ ($k=0,\dots,N-1$) for relative coupling range $p=0.1$ (blue crosses) and $p=0.45$ (black points), respectively. Panels (i,j,k,l) show the synchronization error $E(t)=\sqrt{\sum_{i=1}^N(\phi_i(t)-\phi_1(t))^2}$ for simulations with relative coupling range $p=0.1$ (blue) and $p=0.45$ (black). Each simulation is initialized at a slightly perturbed synchronous state. Parameters: $N=200$, $\epsilon=0.01$, (a,e,i) $\alpha=-0.4\pi$, (b,f,j) $\alpha=-0.2\pi$, (c,g,k) $\alpha=0.2\pi$, (d,h,l) $\alpha=0.4\pi$.}\label{fig:SyncStability_NonLocalRing}
\end{figure}
In figure~\ref{fig:SyncStability_NonLocalRing}, we show different scenarios for the stability of the synchronous state depending on the phase lag parameter $\alpha$ and the coupling range $p$. Due to the necessary condition $\cos(\alpha)>0$ as $\epsilon\to 0$, we consider $\alpha\in(-\pi/2,\pi,2)$ only. Figures~\ref{fig:SyncStability_NonLocalRing}(a) and (b) show that for $-\pi/2<\alpha<0$, the second stability condition~\eqref{eq:synyStabCrit2} is only fulfilled for $p$ larger than a critical value of the coupling range $p_c(\alpha)$. In these cases, a higher coupling range stabilizes the synchronous state. Note that $p_c(\alpha)\to 0$ as $\alpha\to 0$ with $\alpha<0$. The results seen in figure~\ref{fig:SyncStability_NonLocalRing}(a,b) are in agreement with the results for a network of $N=200$ coupled phase oscillators. For this network, we calculate the Laplacian eigenvalues and plot them along with the master stability function in figure~\ref{fig:SyncStability_NonLocalRing}(e,f). The outcomes from numerical simulations are presented in figure~\ref{fig:SyncStability_NonLocalRing}(i,j). 

The situation changes for $0<\alpha<\pi/2$, as shown in figure~\ref{fig:SyncStability_NonLocalRing}(c,d). Here, for a large range of $\alpha$, all nonlocally coupled networks lead to a stable synchronous state. However, closer to $\pi/2$, long distance connections destabilize the synchronous state. In particular, this destabilization can be traced back to the single negative eigenvalue $\nu_1$ of the Laplacian $L^{\mathrm{D}h}$, see figure~\ref{fig:SyncStability_NonLocalRing}(h). Hence, the unstable manifold of the synchronous state is only one-dimensional. This finding is in agreement with the example of $N=200$ phase oscillators presented in figure~\ref{fig:SyncStability_NonLocalRing}(g,h,k,l). Particularly in figure~\ref{fig:SyncStability_NonLocalRing}(l), the low dimension of the unstable manifold manifests itself as follows: The black trajectory first tends to the synchronous state along the $N(N+1)-1$ stable directions before it is repelled along the direction corresponding to $\nu_1$.

We have shown that long distance interactions may stabilize or destabilize the synchronous state depending on the phase lag parameter $\alpha$. In this section, all links have the same weight independent of the corresponding distance. In the next section, we analyze a network with a more realistic structure with a distance-dependent distribution of weights.
%\begin{multline}\label{eq:StabCond2_NonLocalRing}
%\cos(\alpha)\mathrm{Re}(\mu_k)+\sin(\alpha)\mathrm{Re}(\nu_k)=-2\int_{0}^{p} \left(\cos(\alpha)\sin(2\pi x)-\sin(\alpha)\cos(2\pi x)\right)(\cos\left(2\pi k x \right)-1)\mathrm{d}x\\
%=-2\int_{0}^{p} \sin(2\pi x-\alpha)(\cos\left(2\pi k x \right)-1)\mathrm{d}x\\
%=\frac{(1-\cos(2\pi p))}{\pi}+\frac{1}{\pi}\begin{cases}
%	\frac12 (\cos^2(2\pi p)-1) & k=1 \\
%	\frac{1}{(1-k^2)}(k \sin(2\pi p) \sin(2\pi k p)+\cos(2\pi p) \cos(2\pi k p)-1) & k\ne 1
%\end{cases}
%\end{multline}
%\subsection{Synchronization on Watts-Strogatz small-world networks}

\vspace{0.5cm}

\subsection{Synchronization on isotropic and homogeneous network with Gaussian distance distribution}\label{sec:Gaussian}

In the previous section, we used the prototypical example of a nonlocally coupled rings to study the effects of long-range interaction on synchronization. In this setup, however, all links are equally weighted. In realistic systems, in contrast, the number of links with a certain distance are distributed, see~\cite{LET06} for details. To incorporate this into our network model, we weight the links with respect to a distance distribution. Measurements suggest that the distance distribution can be estimated by a mean and a distribution width~\cite{LET06}. The  Gaussian distributions is a paradigmatic distribution that allows for studying effects emanating from the mean and the distribution width. For the remainder of the section, we consider the link distance distribution given by a Gaussian distribution, and weight the links of the network connectivity structure $A$ accordingly, i.e.
\begin{align}\label{eq:GaussianWeightNetwork}
	a_{ij}(d_{ij}/N) =  \begin{cases}
		e^{-\frac{\left(d_{ij}/N-\xi\right)^2}{2\sigma^2}} & d_{ij} \le N/2,\\
		e^{-\frac{\left(1-d_{ij}/N-\xi\right)^2}{2\sigma^2}} & d_{ij} > N/2.
	\end{cases}
\end{align}
where $\xi$ and $\sigma$ are the mean value and the standard deviation, respectively. Note that the standard deviation characterizes the width of the distribution. For the numerical simulations, we normalize each row of $A$ by $\sum_{j=1}^N a_{ij}$. Here, we further make the assumption that the network is homogeneous and isotropic. This means that in any direction from a node and at each node the network looks the same. Hence, we obtain a circulant connectivity structure. An illustration of the weight distribution for $N=12$ is presented in figure~\ref{fig:illustrationDDP}(c).

\begin{figure}
	\centering
	\includegraphics{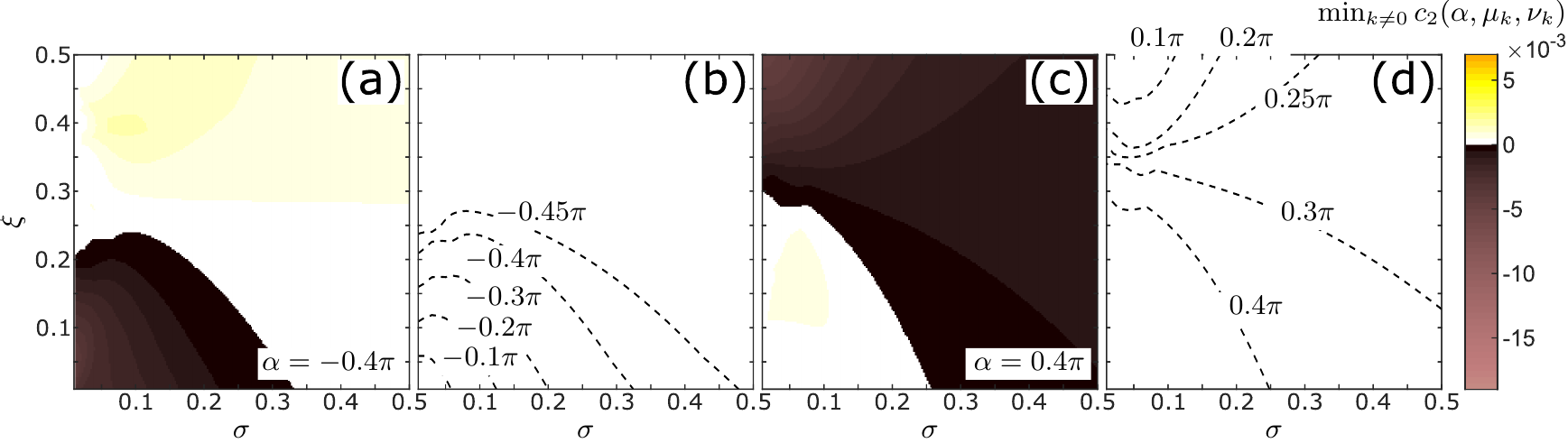}
	\caption{Stability analysis of the synchronous state of system~\eqref{eq:AKS_DDP_phi}--\eqref{eq:AKS_DDP_kappa} with plasticity rule~\eqref{eq:adaptationfunctionSymmetricOnRing}--\eqref{eq:STDP_DDP_betaShift} and coupling structure \eqref{eq:GaussianWeightNetwork}. Panels (a,c) show the minimum over all $k\ne 0< N$ of $c_2$, se ~\eqref{eq:synyStabCrit2}, for two different values of $\alpha$ depending on the mean value $\xi$ and the standard deviation $\sigma$ of the weight distribution. The minima are displayed in color code. Panels (b,d) show the boundaries between stable and unstable regions in $(\sigma,\xi)$-plane for different values of $\alpha$ as given in the figure. Parameters: (a) $N=400$, $\alpha=-0.4\pi$, (c) $N=400$, $\alpha=0.4\pi$, (b,c) $N=200$.}\label{fig:StabGaussianWeights}
\end{figure}
As we know from~\eqref{eq:synyStabCrit1}--\eqref{eq:synyStabCrit2}, for $\epsilon\ll 1$, the values of $c_2(\alpha,\mu_k,\nu_k)$ determine the stability of the synchronous state. In particular, the synchronous state is stable if $c_\text{min}=\min_{k\in{1,N-1}}c_2(\alpha,\mu_k,\nu_k)>0$ for a given $N$ and unstable otherwise. In figure~\ref{fig:StabGaussianWeights}(a), we display $c_\text{min}$ for $\alpha=-0.4\pi$ and different mean values $\xi$ and standard deviations $\sigma$ of the weight distribution. In agreement with the finding in section~\ref{sec:NonLocalRing}, the synchronized state stabilizes due to an increase of long distance interaction expressed by an increase of $\sigma$. Complementing the finding in section~\ref{sec:NonLocalRing}, here, we note that the stability can be also achieved by distributions with peaks at long distance links alone. In this case, the width of the distribution is not important. Figure~\ref{fig:StabGaussianWeights}(b) shows how the boundary between regions corresponding to stable and unstable synchronization change for different values of $\alpha$. As in the case of nonlocally coupled ring networks, with $\alpha\to 0$ (with $\alpha<0$) the boundary tends to the limiting point $(\sigma,\xi)=(0,0)$. On the contrary, if $\alpha\to-\pi/2$ (with $\alpha>-\pi/2$), the width of the distribution has to increase to have stable synchronization for small values of the mean $\xi$. 

An opposite scenario is shown in figure~\ref{fig:StabGaussianWeights}(c) for $\alpha=0.4\pi$. Here, an increase of the weights for long distance links destabilizes the synchronous state, as in figure~\ref{fig:SyncStability_NonLocalRing}(d,h,l). We also note that for small values $\alpha$, the synchronous state is stable for almost all values of $\sigma$ and $\xi$, see figure~\ref{fig:StabGaussianWeights}(d). Only in cases of distribution sharply peaked at long distances, i.e., $\xi$ close to $1/2$ and $\sigma$ close to $0$, the synchronous state is unstable. This effect could not be found in networks with nonlocally coupled rings, see section~\ref{sec:NonLocalRing}.
%--------------------------------------------------
% Conclusions
%--------------------------------------------------
\section{Conclusions}\label{sec:conclusions}

In summary, we have investigated the phenomenon of synchronization on adaptive networks with heterogeneous plasticity rules. In particular, we have modeled systems with distance-dependent plasticity as they have been found in neuronal networks experimentally~\cite{FRO05a,LET06,SJO06,FRO10} as well as computational models~\cite{MEI20a}. For the realization, we have used a ring-like network architecture and associated the distance of two nodes with the distance of their placement on the ring.

In section~\ref{sec:msf}, we have developed a generalized master stability approach for phase oscillator models that are adaptively coupled and where each link has its own adaptation rule (plasticity). By using an explicit splitting of the time scales between fast dynamics of the phase oscillators and slow dynamics of the link weights, we have established an explicit stability condition for the synchronous state. More precisely, we found that the stability is governed by the coupling function and the eigenvalues of two structure matrices. These structure matrices $L^h$ and $L^{\mathrm{D}h}$ are determined by the connectivity of the network and the plasticity rules of the link weights. Note that for the structural matrices, the plasticity rule needs only to be known in the vicinity of $0$, which greatly facilitates the application of the approach to realistic forms of synaptic plasticity. Thus, we have extended previous work on the master stability function of adaptive networks~\cite{BER20b,VOC21} and broaden the scope of potential future applications for this methodology.

In section~\ref{sec:systDDP}, we applied the novel technique to a system of adaptively coupled oscillators with distance-dependent plasticity. Here, we have used a ring-like network structure to study the impact of long- and short-distance connections on the stability of synchronization. For this purpose we introduced an approximation of the eigenvalues for the structure matrices in section~\ref{sec:EVApprox}. This approximation allows for a comprehensive analysis of the stability as a function of various system parameters. Moreover, it enables us to identify critical eigenvalues that govern the stability of the synchronous state. In sections~\ref{sec:NonLocalRing} and~\ref{sec:Gaussian}, we have brought together all methodological findings and applied them to systems with a nonlocally coupled ring structure and with a Gaussian distribution of link weights. The latter structure accounts for the fact that in realistic neuronal populations the number of links with different distances are not uniformly distributed~\cite{LET06}. We found that long-distance connections can stabilize or destabilize the synchronous state, depending on the coupling function between the oscillators. A remarkable fact with respect to neuronal applications relates to the destabilization scenario. Here we observed that the destabilization can be attributed to the pronounced change of the plasticity rule from Hebbian to anti-Hebbian. For more realistic connectivity structures, we found that weight distributions of the connectivity structure with sharp peaks at long distances lead to destabilization for a wide range of the coupling function.

All in all, in this article, we have provided a general framework to study the emergence of synchronization in neuronal system with a heterogeneous plasticity rule. The developed methodology is not limited to distance-dependent types of plasticity and can also be used for non-symmetric setups. For the latter case, we have provided the necessary analytical result. In this work, we have restricted our attention to the case of phase oscillators, but the methods can be extended to more realistic neuron models by using techniques established, for example, in~\cite{BER20b}. Moreover, techniques are available that allow for further generalization towards systems with slightly different local dynamics at each node~\cite{SUN09a}. On the one hand, the master stability approach offers a great tool to study the stability of the synchronous state depending on the networks structure. On the other hand, this approach allows for characterizing the network structures that are, in some sense, optimal for synchronization~\cite{SKA14,ACH15}. In this regard, it remains an open question as to how plasticity optimizes the synchronizability of the network in a self-organized way. In addition, recent studies have shown that there is a great interest in synchronization phenomena to understand diseases such as Parkinson's disease~\cite{KRO20,KRO20a,KHA21} or epilepsy~\cite{OLM19,GER20} for the development of proper therapeutic treatments. We believe that our work provides an important step toward understanding synchronization under realistic conditions.

\section*{Conflict of Interest Statement}
%All financial, commercial or other relationships that might be perceived by the academic community as representing a potential conflict of interest must be disclosed. If no such relationship exists, authors will be asked to confirm the following statement:

The authors declare that the research was conducted in the absence of any commercial or financial relationships that could be construed as a potential conflict of interest.

\section*{Author Contributions}

RB designed the study and did the numerical simulations. RB and SY developed the analytical results. Both authors contributed to the preparation of the manuscript. Both authors read and approved the final manuscript.

\section*{Funding}
This work was supported by the German Research Foundation DFG, Project Nos. 411803875 and 440145547, and the Open Access Publication Fund of TU Berlin.

\end{document}